\newtheorem{thm}{Theorem}[section]
\newtheorem{conj}[thm]{Conjecture}
\newtheorem{prop}[thm]{Proposition}
\newtheorem{defn}[thm]{Definition}
\newtheorem{exam}[thm]{Example}
\numberwithin{equation}{section}
\begin{document}

\title[Local derivations on Solvable Lie algebras]{Local derivations on Solvable Lie algebras}%

\author{Ayupov Sh.A., Khudoyberdiyev A.Kh.}

\address{[Ayupov  \ Sh.\ A.]
Institute of Mathematics Academy of Sciences of Uzbekistan, 81, Mirzo Ulugbek str., Tashkent, 100170, Uzbekistan.}
\email{sh\_ayupov@mail.ru}

\address{[Khudoyberdiyev \ A.\ Kh.]
National University of Uzbekistan, Institute of Mathematics Academy of Sciences of Uzbekistan, Tashkent, 100174, Uzbekistan.}
\email{khabror@mail.ru}

\maketitle
\begin{abstract}
We show that in the class of solvable Lie algebras there exist algebras which admit local derivations which are not ordinary derivation
and also algebras for which every local derivation is a derivation.
We found necessary and sufficient conditions under which any local derivation of solvable Lie algebras with abelian nilradical and one-dimensional complementary space is a derivation. Moreover, we prove that every local derivation on a finite-dimensional solvable Lie algebra with model nilradical and
maximal dimension of complementary space is a derivation.

\end{abstract} \maketitle

\textbf{Mathematics Subject Classification 2000}: 16W25, 16W10, 17B20, 17B30.

\textbf{Key Words and Phrases}: Lie algebra, solvable Lie algebra, nilradical,
derivation, local derivation.


\section{Introduction}

The notions of local derivations were first introduced in 1990 by R.V.
Kadison \cite{Kadison} and D.R.Larson, A.R.Sourour \cite{Larson}.
The main problems concerning this notion are to find conditions under which local derivations become derivations and to present examples of algebras with local derivations that are not derivations.
R.V.Kadison proves that each continuous local
derivation of a von Neumann algebra $M$ into a dual Banach $M$-bimodule is a derivation. In 2001 B.E.Johnson
culminated the studies on local derivations, showing that every
local derivation from a $C^*$-algebra $A$ into a Banach $A$-bimodule is a
derivation \cite{John}.

Investigation of local derivations on algebras of measurable operators were initiated in papers \cite{Ayupov2}, \cite{Ayupov3}, \cite{Bresar} and others. In particular, the paper \cite{Ayupov2} is devoted to the study of local derivations on the algebra $S(M, \tau)$ of $\tau$-measurable operators affiliated with a von Neumann algebra $M$ and a faithful normal semi-finite trace $\tau.$ It is proved that every local derivation on $S(M, \tau)$ which is continuous in the measure topology automatically becomes a derivation. The paper \cite{Ayupov2} also deals with the problem of existence of local derivations which are not derivations on algebras of measurable operators. Namely, necessary and sufficient conditions were obtained for the algebras of measurable and $\tau$-measurable operators affiliated with a commutative von Neumann algebra to admit local derivations that are not derivations.

Later, several papers have been devoted to similar notions and corresponding problems
for derivations and automorphisms of Lie algebras \cite{Ayupov1}, \cite{Chen}.
In \cite{Ayupov1} Sh.A.Ayupov and K.K. Kudaybergenov
have proved that every local
derivation on semi-simple Lie algebras is a derivation and gave examples of nilpotent
finite-dimensional Lie algebras with local derivations which are not derivations.
The paper \cite{Ayupov4} devoted to investigation of so-called 2-local derivations on finite-dimensional Lie algebras and it is proved
that every 2-local derivation on a semi-simple Lie algebra $L$ is a derivation and that
each finite-dimensional nilpotent Lie algebra with dimension larger than two admits
2-local derivation which is not a derivation.

It is well known that any finite-dimensional Lie algebra over a field of characteristic zero is decomposed into the semidirect sum of semi-simple subalgebra and solvable radical.
The semi-simple part is a direct sum of simple Lie algebras which are completely classified, and
solvable Lie algebras can be classified by means of its nilradical.
There are several papers which deal with
the problem of classification of all solvable Lie algebras with a given nilradical, for example
Abelian, Heisenberg, filiform, quasi-filiform nilradicals, etc. \cite{AnCaGa}, \cite{NdWi}, \cite{RubWi}.
It should be noted that any derivation on a semi-simple Lie algebra is inner and any nilpotent Lie algebra has a derivation which is not inner. In the class of solvable Lie algebras there exist algebras which any derivation is inner and also algebras which admits not inner derivation.

In this paper we investigate local derivations of solvable Lie algebras.
We show that in the class of solvable Lie algebras there exists a solvable algebra admitting local derivations which are not ordinary derivation
and also there exist algebras for which every local derivation is a derivation.

More precisely, local derivations of solvable Lie algebras with abelian nilradical and one-dimensional complementary space are investigated.
A necessary and sufficient conditions under which exery local derivation of such Lie algebras becomes a derivation are found. We also consider solvable Lie algebra with model nilradical and maximal dimension of complementary space and prove that any local derivation of such type of algebras is a derivation.

\section{Preliminaries}

In this section we present some known facts
about Lie algebras and their derivations.

Let $L$ be a Lie algebra. For a Lie algebra $L$ consider the following central lower and
derived sequences:
$$
L^1=L,\quad L^{k+1}=[L^k,L^1], \quad k \geq 1,
$$
$$L^{[1]} = 1, \quad L^{[s+1]} = [L^{[s]}, L^{[s]}], \quad s \geq 1.$$

\begin{defn} A Lie algebra $L$ is called
nilpotent (respectively, solvable), if there exists  $p\in\mathbb N$ $(q\in
\mathbb N)$ such that $L^p=0$ (respectively, $L^{[q]}=0$).
\end{defn}

Any Lie algebra $L$ contains a unique maximal solvable (resp. nilpotent) ideal, called the radical (resp. nilradical) of the algebra. A non-trivial Lie algebra is called semi-simple if its radical is zero.

A derivation on a Lie algebra $L$ is a linear map  $d: L \rightarrow L$  which satisfies the Leibniz rule:
$$d([x,y]) = [d(x), y] + [x, d(y)], \quad \text{for any} \quad x,y \in L.$$

The set of all derivations of a Lie algebra $L$ is a Lie algebra with respect to
commutation operation and it is denoted by $Der(L).$

For any element $x\in L$ the operator of
right multiplication $ad_x: L \to L$, defined as $ad_x(z)=[z,x]$ is a derivation, and derivations of this form are called inner derivation. The set of all inner derivations of $L,$ denoted $ad (L),$ is an ideal in $Der(L).$

\begin{defn}
A linear operator $\Delta$ is called a local derivation if for any $x \in L,$ there exists a derivation $d_x: L \rightarrow L$ (depending on $x$) such that
$\Delta(x) = d_x(x).$ The set of all local derivations on $L$ we denote by $LocDer(L).$
\end{defn}

We have following Theorem for the local derivation on semi-simple Lie algebras.
\begin{thm}\cite{Ayupov1} Let $L$ be a finite-dimensional semi-simple Lie algebra. Then any local derivation $\Delta$ on $L$ is a derivation.
\end{thm}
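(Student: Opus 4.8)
The statement is quoted from \cite{Ayupov1}; here is the route I would take to prove it.

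Since $L$ is a finite-dimensional semi-simple Lie algebra over a field of characteristic zero, two structural facts are at my disposal: the Killing form $\kappa$ is non-degenerate, and every derivation of $L$ is inner, so $Der(L)=ad(L)$. Consequently the defining property of a local derivation $\Delta$ says precisely that for every $x\in L$ there is $a_x\in L$ with $\Delta(x)=ad_{a_x}(x)=[x,a_x]$; equivalently $\Delta(x)\in[x,L]=\operatorname{im}(ad_x)$ for all $x$. The goal is to exhibit a single $a\in L$ with $\Delta=ad_a$.

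The first step is to transport the image condition through the Killing form. Using invariance of $\kappa$ one checks that $[x,L]^{\perp}=C_L(x)$, the centralizer of $x$, so $\Delta(x)\in[x,L]$ is equivalent to $\kappa(\Delta(x),y)=0$ whenever $[x,y]=0$. I therefore introduce the bilinear form $B(x,y)=\kappa(\Delta(x),y)$. Taking $y=x$ gives $B(x,x)=0$, so $B$ is alternating, and the local-derivation hypothesis becomes the single clean condition that $B(x,y)=0$ whenever $[x,y]=0$. A direct computation with $\kappa$ shows that an inner derivation $ad_a$ corresponds to the alternating form $(x,y)\mapsto-\kappa(a,[x,y])$, which factors through the bracket. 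Since $\kappa$ is non-degenerate, every linear functional on $L$ has the form $z\mapsto-\kappa(a,z)$, and defining $\Delta$ from a given alternating $B$ by $\kappa(\Delta(x),y)=B(x,y)$ recovers a local derivation. Thus the theorem is equivalent to the statement that every alternating bilinear form $B$ on $L$ vanishing on commuting pairs factors as $B(x,y)=\mu([x,y])$ for some linear functional $\mu$ on $L$.

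To prove this reduced statement I view $B$ as a linear functional $\widetilde B$ on $\Lambda^2 L$ and the bracket as the surjection $\partial\colon\Lambda^2 L\to L$, $x\wedge y\mapsto[x,y]$ (surjective because $[L,L]=L$). Factoring $B$ through $\partial$ means exactly that $\widetilde B$ vanishes on $\ker\partial$. The plan is to verify this on a Chevalley basis adapted to a Cartan subalgebra $\mathfrak h$ with root decomposition $L=\mathfrak h\oplus\bigoplus_{\alpha}L_\alpha$: regularity of a generic $h\in\mathfrak h$ gives $C_L(h)=\mathfrak h$ and hence $B(\mathfrak h,\mathfrak h)=0$; then the explicit centralizers $C_L(e_\alpha)$, which contain $\ker\alpha$, $e_\alpha$, and every $e_\beta$ with $\alpha+\beta\notin\Phi\cup\{0\}$, force $B(e_\alpha,\cdot)$ to vanish on large subspaces and pin the remaining values $B(e_\alpha,e_{-\alpha})$ and $B(h,e_\alpha)$ down to one consistent functional $\mu$. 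Setting $a$ by $\kappa(a,\cdot)=-\mu$ then yields $\Delta=ad_a$.

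\textbf{Main obstacle.} The delicate point is the factorization itself: the hypothesis controls $\widetilde B$ only on the \emph{decomposable} vectors $x\wedge y$ of $\ker\partial$, while $\ker\partial$ contains many non-decomposable relations, so one must show these impose no extra constraint, i.e. that the commuting decomposable pairs already determine $B$ consistently. I expect this to be the crux, and to hinge on the Jacobi identity together with the structure constants $N_{\alpha\beta}$ in $[e_\alpha,e_\beta]=N_{\alpha\beta}e_{\alpha+\beta}$: the Jacobi relations among triples of root vectors are exactly what guarantee that the value $\mu$ read off from a bracket is independent of how a given root vector is written as a commutator.
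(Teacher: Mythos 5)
This theorem is quoted in the paper from \cite{Ayupov1}; no proof of it appears in the present text, so there is no internal argument to compare yours against, and I can only judge the proposal on its own terms. Your reduction is correct as far as it goes: since every derivation of a semi-simple $L$ is inner and the Killing form $\kappa$ is non-degenerate with $[x,L]^{\perp}=C_L(x)$, the local derivation condition is exactly that the bilinear form $B(x,y)=\kappa(\Delta(x),y)$ vanishes on commuting pairs, and $\Delta$ is a derivation exactly when $B$ factors through the bracket. But observe that this is an exact equivalence, not a simplification: the theorem is now precisely the assertion that the decomposable vectors $x\wedge y$ with $[x,y]=0$ span $\ker\bigl(\partial\colon\Lambda^{2}L\to L\bigr)$, and the entire difficulty has been transferred into that assertion.

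The genuine gap is that your plan for proving the spanning assertion cannot succeed as described. You propose to verify everything on a Chevalley basis, using $C_L(h)$ for $h\in\mathfrak h$ and the centralizers $C_L(e_\alpha)$. Already for $\mathfrak{sl}_3$ a dimension count rules this out: the commuting pairs available among basis elements, namely $\mathfrak h\wedge\mathfrak h$, the six elements $k_\alpha\wedge e_\alpha$ with $k_\alpha\in\ker\alpha$, and the six elements $e_\alpha\wedge e_\beta$ with $\alpha+\beta\notin\Phi\cup\{0\}$, span at most a $13$-dimensional subspace of $\ker\partial$, whereas $\dim\ker\partial=\binom{8}{2}-8=20$. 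Vanishing on all of these therefore still leaves a $7$-dimensional space of alternating forms that do not factor through the bracket; concretely, the values $B(e_\alpha,e_\beta)$ for $\alpha+\beta\in\Phi$ (which you omit from your list of ``remaining values'') and the mutual consistency of the numbers $B(e_\alpha,e_{-\alpha})=\mu(h_\alpha)$ across different roots are simply not controlled by centralizers of basis vectors. Closing the argument requires producing commuting pairs built from non-basis elements (sums of root vectors, non-regular semisimple elements, and other elements with large centralizers), and that is where the actual content of the theorem lies. You correctly sense that a crux remains, but locating it in the Jacobi identity and the structure constants $N_{\alpha\beta}$ misidentifies the missing ingredient: what is missing is a sufficient supply of commuting pairs, not a consistency check among brackets.
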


Let $N$ be a finite-dimensional nilpotent Lie algebra. For the matrix of linear operator $ad_x$ denote by $C(x)$ the
descending sequence of its Jordan blocks' dimensions. Consider the
lexicographical order on the set $C(L)=\{C(x)  \ | \ x \in L\}$.

\begin{defn} \label{d4} A sequence
$$\left(\max\limits_{x\in L\setminus L^2} C(x) \right) $$ is said to be the
characteristic sequence of the nilpotent Lie algebra $N.$
\end{defn}

\begin{defn} Nilpotent Lie algebra with characteristic sequence $(n_1, n_2, \dots, n_k, 1)$ is said to be model if there exists a basis $\{e_1, e_2, \dots, e_n\}$ such that
\begin{equation}\begin{cases}[e_i, e_1] = e_{i+1}, & 2 \leq i \leq n_1, \\
[e_{n_1+\dots + n_{j-1}+i}, e_1] = e_{n_1+\dots + n_{j-1}+i+1}, & 2 \leq j \leq k, \ 2 \leq i \leq n_j,\\
 \end{cases}\end{equation}
where omitted products are equal to zero.
\end{defn}

\section{Local derivation of solvable Lie algebras with abelian nilradical}

In this section we investigate solvable Lie algebras with abelian nilradical.
First we consider the following example.
\begin{exam}\label{exam3.1}
Consider following three-dimensional solvable Lie algebras with two-dimensional abelian nilradical.
$$\begin{array}{lllll} L_1:&  [e_2, e_1] = e_2, &  [e_3, e_1] = e_3;\\[1mm]
L_2:&  [e_2, e_1] = e_2+e_3,& [e_3, e_1]  = e_3.\end{array}$$
Any local derivation of $L_1$ is a derivation, but $L_2$ admits a local derivation which is not a derivation.

Indeed, by the direct calculation we obtain that the matrix form of the derivations of algebras  $L_1$ and $L_2,$ respectively have the following forms:
$$Der(L_1)=\left(\begin{array}{ccc}
0&\xi_{1,2}&\xi_{1,3}\\
0&\xi_{2,2}&\xi_{2,3}\\
0&\xi_{3,2}&\xi_{3,3}
\end{array}\right), \quad Der(L_2)=\left(\begin{array}{ccc}
0&\xi_{1,2}&\xi_{1,3}\\
0&\xi_{2,2}&\xi_{2,3}\\
0&0&\xi_{2,2}
\end{array}\right).$$

It is not difficult to show that any local derivation on $L_1$ is a derivation and
linear operator $\Delta$ defined as $\Delta(e_1) = 0, \ \Delta(e_2) = 0, \  \Delta(e_3) = e_3$ on $L_2$ is a local derivation which is not a derivation.
\end{exam}

Let $L$ be a solvable Lie algebra with abelian nilradical $N$ and let $dimN=n,$ $dimL =n+1.$
Take a basis $\{x, e_1, e_2, \dots, e_n\}$ of $L$ such that
$\{e_1, e_2, \dots, e_n\}$ a basis of $N.$ It is known that the operator of right multiplication  $ad_x$ is a non nilpotent operator on $N$ [6].
Moreover, such solvable algebras characterized by the operator $ad_x,$ i.e.,
two solvable algebras with abelian nilradical $N$ and one-dimensional complementary space are isomorphic if and only if the
corresponding operators of right multiplication have the same Jordan forms.


\begin{thm}\label{thm3.2} Let $L$ be a solvable Lie algebra with the abelian nilradical $N$ and the dimension of the complementary space to the nilradical is equal to one.
Any local derivation on $L$ is a derivation if and only if $ad_x$ is a diagonalizable operator.
\end{thm}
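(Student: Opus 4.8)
The plan is to work in the explicit basis $\{x,e_1,\dots,e_n\}$ where $N=\operatorname{span}\{e_1,\dots,e_n\}$ is the abelian nilradical and $ad_x$ encodes the algebra. Since two such algebras are isomorphic iff the operators $ad_x$ share a Jordan form, and since the conclusion is isomorphism-invariant, I would first reduce to the case where $A:=ad_x|_N$ is already in Jordan canonical form. Then the two directions are attacked separately.

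\textbf{The ``if'' direction (diagonalizable $\Rightarrow$ every local derivation is a derivation).} Assume $A$ is diagonal, say $A e_i=\lambda_i e_i$ with eigenvalues $\lambda_i$, so the only nonzero brackets are $[e_i,x]=\lambda_i e_i$. First I would compute $Der(L)$ explicitly. A derivation $d$ must preserve the nilradical $N$ (it is characteristic), so $d(N)\subseteq N$; writing $d(x)=\alpha x + \sum_j \beta_j e_j$ and $d(e_i)=\sum_j a_{ij}e_j$, the Leibniz rule applied to $[e_i,x]=\lambda_i e_i$ forces constraints on the matrix $(a_{ij})$ that couple only indices $i,j$ with $\lambda_i=\lambda_j$, together with conditions on $\alpha,\beta_j$. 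The resulting picture is that $d|_N$ is block-diagonal according to the eigenspaces of $A$, with $\alpha=0$ on the $x$-component of the $x$-image forced (or isolated) by the non-nilpotency. Given a local derivation $\Delta$, for each basis vector $e_i$ there is a genuine derivation agreeing with $\Delta$ on $e_i$; the key is to show the values $\Delta(e_i)$ and $\Delta(x)$ individually satisfy the same linear constraints that cut out $Der(L)$. Because in the diagonal case these constraints are ``coordinatewise'' (each eigenspace block is independent and the constraint on $\Delta(e_i)$ only involves the image of $e_i$), the pointwise derivation property upgrades to the global one. I would verify that $\Delta(e_i)$ lies in the appropriate eigenspace-respecting set and that the $x$-image is consistent, concluding $\Delta\in Der(L)$.

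\textbf{The ``only if'' direction (non-diagonalizable $\Rightarrow$ some local derivation is not a derivation).} Here $A$ has a nontrivial Jordan block, so after reordering there exist basis vectors with $[e_{k+1},x]=\lambda e_{k+1}$ and $[e_k,x]=\lambda e_k + e_{k+1}$ for some eigenvalue $\lambda$. Mimicking Example \ref{exam3.1} (the algebra $L_2$ is exactly the $2\times 2$ Jordan block case), I would exhibit an explicit linear operator $\Delta$ that is local but not a derivation. The natural candidate is the rank-one map sending $e_{k+1}\mapsto e_{k+1}$ and killing all other basis vectors (and $x$). To show it is local I must, for each $v\in L$, produce a derivation $d_v$ with $d_v(v)=\Delta(v)$: for $v$ with nonzero $e_{k+1}$-component one uses a scalar multiple of a diagonal-type derivation, and for $v$ in the complementary hyperplane $\Delta(v)=0$ so $d_v=0$ works. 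Checking that $\Delta$ fails the Leibniz rule amounts to evaluating it on the bracket $[e_k,x]=\lambda e_k+e_{k+1}$ and comparing with $[\Delta e_k,x]+[e_k,\Delta x]=0$ versus $\Delta(\lambda e_k+e_{k+1})=e_{k+1}\neq 0$, giving the contradiction.

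\textbf{Main obstacle.} The routine linear algebra is manageable; the delicate point is the ``if'' direction, specifically proving that the pointwise derivation constraints genuinely decouple across Jordan/eigenvalue blocks so that a local derivation's values can be assembled into a single global derivation. I expect the crux to be showing that for each $i$ the vector $\Delta(e_i)$ is forced to lie in a subspace determined only by $e_i$ (not by interactions with other basis vectors), which is precisely what fails when off-diagonal Jordan entries are present. I would therefore spend most care on describing $Der(L)$ in the diagonalizable case finely enough to see this decoupling, likely organizing the eigenvalues into groups of equal value and handling resonance relations (eigenvalues differing by others) only if they affect the $x$-image; but since $N$ is abelian and acts trivially on itself, such resonances do not enter the bracket relations, which keeps the argument clean.
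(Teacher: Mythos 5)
Your ``if'' direction is essentially the paper's argument and is sound: one computes that the relation $\alpha_{i,j}(\lambda_i-\lambda_j)=0$ is the \emph{only} constraint cutting out $Der(L)$ in the diagonalizable case, so the constraints act row by row, i.e.\ on the image of each basis vector separately. Since $\Delta(e_i)=d_{e_i}(e_i)$ automatically satisfies the constraint attached to the $i$-th row (it lies in the span of the $e_j$ with $\lambda_j=\lambda_i$) and $\Delta(x)=d_x(x)\in N$, the values reassemble into a single derivation. This is exactly the decoupling you identify as the crux, and it is the same route the paper takes.

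In the ``only if'' direction your candidate $\Delta$ (the rank-one projection onto the terminal vector $e_{k+1}$ of a Jordan chain of length $\ge 2$) is a valid and in fact slightly cleaner choice than the paper's ($\Delta(e_i)=e_i$ for $i<k$, $\Delta(e_k)=2e_k$, zero elsewhere), and your check that it violates the Leibniz rule on $[e_k,x]=\lambda e_k+e_{k+1}$ is correct. The gap is in your verification of locality: for $v$ with nonzero $e_{k+1}$-component you propose to use ``a scalar multiple of a diagonal-type derivation,'' and this fails already for $v=e_k+e_{k+1}$, because any derivation acting as a scalar $c$ on the whole chain sends $v$ to $c(e_k+e_{k+1})$, which is never equal to $\Delta(v)=e_{k+1}$. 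The restriction of $Der(L)$ to a Jordan block of size $\ge 2$ is the upper-triangular Toeplitz algebra $d(e_i)=\alpha_1 e_i+\alpha_2 e_{i+1}+\cdots$, and to realize $\Delta(v)$ you must invoke the nilpotent shift parameters: letting $\eta_s$ be the first nonzero coordinate of $v$ along the chain, take $\alpha_1=0$ and the single shift carrying $e_s$ to $(\eta_{k+1}/\eta_s)\,e_{k+1}$; and when $v$ has a nonzero $x$-component one should instead absorb everything into $d_v(x)$, which is a free element of $N$. This case analysis on the lowest nonzero coordinate is precisely what the paper carries out for its own candidate, and without it (or an equivalent argument) your proof that $\Delta$ is local does not go through.
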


\begin{proof}  Let $ad_x$ diagonalized operator. Then  there exists a basis $\{e_1, e_2, \dots, e_n\}$ of $N$ such that the Jordan form of the operator $ad_x$ on this basis has the form:
$$\left(\begin{array}{cccccccc}
\lambda_1&0&0&\dots&0\\
0&\lambda_2&0&\dots&0\\
0&0&\lambda_3&\dots&0\\
\dots&\dots&\dots&\dots&\dots\\
0&0&0&\dots&\lambda_n\\
\end{array}\right)$$

Consequently,
$$[e_i,x]=\lambda_ie_i, \ \ 1\leq i\leq n.$$

Let $d\in Der(L)$, then
$$\left\{\begin{array}{ll}
d(x)=\beta_1e_1+\beta_2e_2+\dots+\beta_ne_n,\\
d(e_i)=\alpha_{i,1}e_1+\alpha_{i,2}e_2+\dots +\alpha_{i,n}e_n, \ 1\leq i\leq n.\\
\end{array}\right.$$

From the property of derivation we have $$d([e_i,x])=[d(e_i),x]+[e_i,d(x)]=$$
$$=[\alpha_{i,1}e_1+\alpha_{i,2}e_2+\dots+\alpha_{i,n}e_n,x]+[e_i, \beta_1e_1+\beta_2e_2+\dots+\beta_ne_n] = $$
$$=\alpha_{i,1}\lambda_1e_1+\alpha_{i,2}\lambda_2e_2+\dots+\alpha_{i,n}\lambda_ne_n.$$

On the other hand,
$$d([e_i,x])=\lambda_id(e_i) = \lambda_i(\alpha_{i,1}e_1+\alpha_{i,2}e_2+ \dots+\alpha_{i,n}e_n).$$

Comparing the coefficients at the basis elements we obtain
\begin{equation}\label{eq3.1} \alpha_{i,j}(\lambda_i - \lambda_j)=0, \quad 1 \leq j \leq n.\end{equation}

\textbf{Case 1.} Let $\lambda_i\not=\lambda_j,$ for any $i,j (i\neq j),$ then we get
$\alpha_{i,j}=0$ for $i\neq j.$ Thus, we have that the matrix of the derivations of $L$ has the form:
$$Der(L)=\left(\begin{array}{cccccccc}
0&\beta_1&\beta_2&\dots&\beta_n\\
0&\alpha_{1,1}&0&\dots&0\\
0&0&\alpha_{2,2}&\dots&0\\
\dots&\dots&\dots&\dots&\dots\\
0&0&0&\dots&\alpha_{n,n}\\
\end{array}\right)$$

\textbf{Case 2.} Let $\lambda_i=\lambda_j$ for some $i$ and $j.$ Without loss of generality, we can assume that 
$$\lambda_1=\dots=\lambda_s, \ \lambda_{s+1}=\dots=\lambda_{s+p}, \quad \dots \quad \lambda_{n-q}=
\dots=\lambda_n.$$

From the equality \eqref{eq3.1} we obtain that
the matrix form of $Der(L)$ is the following:
$$\left(\begin{array}{cccccccccccccccc}
0&\beta_1&...&\beta_s&\beta_{s+1}&\dots&\beta_{s+p}&\dots&\beta_{n-q}&\dots&\beta_n\\
0&\alpha_{1,1}&\dots&\alpha_{1,s}&0&\dots&0&\dots&0&\dots&0\\
\dots&\dots&\dots&\dots&\dots&\dots&\dots&\dots&\dots&\dots&\dots\\
0&\alpha_{s,1}&\dots&\alpha_{s,s}&0&\dots&0&\dots&0&\dots&0\\
0&0&\dots&0&\alpha_{s+1,s+1}&\dots&\alpha_{s+1,s+p}&\dots&0&\dots&0\\
\dots&\dots&\dots&\dots&\dots&\dots&\dots&\dots&\dots&\dots&\dots\\
0&0&\dots&0&\alpha_{s+p,s+1}&\dots&\alpha_{s+p,s+p}&\dots&0&\dots&0\\
\dots&\dots&\dots&\dots&\dots&\dots&\dots&\dots&\dots&\dots&\dots\\
0&0&\dots&0&0&\dots&0&\dots&\alpha_{n-q,n-q}&\dots&\alpha_{n-q,n}\\
\dots&\dots&\dots&\dots&\dots&\dots&\dots&\dots&\dots&\dots&\dots\\
0&0&\dots&0&0&\dots&0&\dots&\alpha_{n,n-q}&\dots&\alpha_{n,n}\\
\end{array}\right)$$

Let $\Delta$ be a local derivation on $L$ and let
$$\left\{\begin{array}{ll}
\Delta(x)=\xi x+\xi_1e_1+\xi_2e_2+\dots+\xi_ne_n,\\
\Delta(e_i)=\zeta_ix+\zeta_{i,1}e_1+\zeta_{i,2}e_2+\dots +\zeta_{i,n}e_n, \ 1\leq i\leq n.\\
\end{array}\right.$$

Considering the equalities $\Delta(x) = d_x(x)$ and $\Delta(e_i) = d_{e_i}(e_i)$ for $1\leq i\leq n,$ we conclude that $\Delta$ is a derivation.  Therefore,  any local derivation on $L$ is a derivation.

Now let Jordan form of the operator $ad_x$ be
$$ad_x=\left(\begin{array}{cccccccccc}
J_1&0&\dots&0\\
0&J_2&\dots&0\\
\dots&\dots&\dots&\dots\\
0&0&\dots&J_s\\
\end{array}\right).$$
and suppose that there exists a Jordan block with order $k (k\geq 2).$
Without loss of generality one can assume that $J_1$ has order $k\geq 2$.
Then the table of multiplication of $L$ has the form:
$$\left\{\begin{array}{ll}
e_ix=\lambda_1e_i+e_{i+1},& 1 \leq i \leq k-1,\\
e_kx=\lambda_1e_k,\\
e_ix=\lambda_ie_i + \mu_ie_{i+1}, & { k+1}\leq i\leq n,
\end{array}\right.$$
where $\mu_i = 0;1.$

By the direct verification of the property of derivation
we obtain that the general form of the matrix of $Der(L)$ is
$$\left(\begin{array}{cccccccccccccc}
0&\beta_1&\beta_2&\dots&\beta_{k-1}&\beta_k&\beta_{k+1}&\dots&\beta_n\\
0&\alpha_{1,1}&\alpha_{1,2}&\dots&\alpha_{1,k-1}&\alpha_{1,k}&0&\dots&0\\
0&0&\alpha_{1,1}&\dots&\alpha_{1,k-2}&\alpha_{1,k-1}&0&\dots&0\\
\dots&\dots&\dots&\dots&\dots&\dots&\dots&\dots\\
0&0&0&\dots&\alpha_{1,1}&\alpha_{1,2}&0&\dots&0\\
0&0&0&\dots&0&\alpha_{1,1}&0&\dots&0\\
0&0&0&\dots&0&0&H_2&\dots&0\\
\dots&\dots&\dots&\dots&\dots&\dots&\dots&\dots&\dots\\
0&0&0&\dots&0&0&0&\dots&H_s\\
\end{array}\right)$$
where $H_i$ are the block matrices with the same dimension of Jordan blocks $J_i$.

Consider the linear operator $\Delta: L \rightarrow L$ defined by
$$\Delta(x) = 0, \quad  \Delta(e_i) = e_i, \quad 1 \leq i  \leq k-1,$$
$$\Delta(e_k) = 2e_k, \quad \Delta(e_i) = 0, \quad k+1 \leq i  \leq n.$$

It is obvious  that $\Delta$ is not a derivation. We show that $\Delta$ is a local derivation.

Indeed,
for any element $y = \gamma x + \eta_1 e_1 + \eta_2 e_2 + \dots + \eta_n e_n \in L$ we consider
$$\Delta(y) = \Delta(\gamma x + \eta_1 e_1 + \eta_2 e_2 + \dots + \eta_n e_n) = \eta_1 e_1 + \eta_2 e_2 + \dots +\eta_{k-1} e_{k-1} +2\eta_k e_k.$$

Consider the derivation $d_y$ such that
$$d_y(x) =0, \qquad d_y(e_i) =0, \quad k+1 \leq i \leq n,$$
$$d_y(e_i) = \alpha_{1,1}e_i + \alpha_{1,2}e_{i+1} + \dots + \alpha_{1,k-i+1}e_k, \quad 1 \leq i \leq k.$$

Then
$$d_y(y) = d_y(\gamma x + \eta_1 e_1 + \eta_2 e_2 + \dots + \eta_n e_n)= $$
$$=\eta_1\alpha_{1,1} e_1 + (\eta_2\alpha_{1,1} + \eta_1\alpha_{1,2})e_2 + \dots +  (\eta_k \alpha_{1,1} + \eta_{k-1}\alpha_{1,2} + \dots + \eta_{1}\alpha_{1,{k}})e_k.$$

Let is show that for an appropriate choice of the parameters $\alpha_{i,j}$ that $\Delta(y) = d_y(y).$ This is satisfied if 
$$\begin{cases} \eta_1 = \eta_1\alpha_{1,1},\\ \eta_2 = \eta_2\alpha_{1,1} + \eta_1\alpha_{1,2}, \\ \dots\dots\dots\dots\dots\dots\dots \\
\eta_{k-1} = \eta_{k-1}\alpha_{1,1} + \eta_{k-2}\alpha_{1,2} + \dots + \eta_{1}\alpha_{1,{k-1}}, \\ 2\eta_k =
\eta_k \alpha_{1,1} + \eta_{k-1}\alpha_{1,2} +  \dots + \eta_{1}\alpha_{1,{k}}.
\end{cases}$$

Note that this system of equations has a solution with respect to $\alpha_{i,j}$ for any parameters $\eta_i.$ Indeed:
\begin{itemize}
\item if $\eta_1 \neq 0,$ then $$\alpha_{1,1} =1, \ \alpha_{1,2} = \dots = \alpha_{1,k-1} = 0, \ \alpha_{1,{k}} = \frac {\eta_k} {\eta_1},$$
\item if $\eta_1=\dots = \eta_{s-1} =0$ è $\eta_s\neq 0, \ 2 \leq s \leq k-1,$  then  $$\alpha_{1,1} =1, \ \alpha_{1,2} = \dots = \alpha_{1,k-s} = 0, \ \alpha_{1,{k-s+1}} = \frac {\eta_k} {\eta_s},$$
\item if  $\eta_1=\dots = \eta_{k-1} =0$ è $\eta_k\neq 0$ then we have $\alpha_{1,1} =2.$
\end{itemize}

Hence, $\Delta$ is a local derivation.

\end{proof}

Now we consider solvable Lie algebras with abelian nilradical and maximal complementary vector space.
It is known that the maximal dimension of complementary space for solvable Lie algebras with $n$-dimensional abelian nilradical is equal to $n.$ Moreover, up to isomorphism there exist only one such solvable Lie algebra with the following non-zero multiplications:
$$L_n: \left[e_i, x_i\right]=e_i, \quad 1 \leq i \leq n.$$

\begin{thm} 
Any local derivation on $L_n$ is a derivation.

\end{thm}

\begin{proof}
First, we describe the derivations of the algebra $L_n.$ Let $d\in Der(L_n)$ then we have $$d(e_i)=\sum\limits_{j=1}^n\alpha_{i,j}e_j, \quad d(x_i)=\sum\limits_{j=1}^n\beta_{i,j}e_j, \quad 1 \leq i \leq n.$$

Using the property of derivation $d([e_i,x_j])=[d(e_i),x_j]+[e_i,d(x_j)]$ we obtain that $\alpha_{i,j}=0$ for  $i \neq j.$
From the equality $d([x_i,x_j])=[d(x_i),x_j]+[x_i,d(x_j)]$ we get $\beta_{i,j}=0$ for  $i \neq j.$
Therefore, we have that any derivation of $L_n$ has the following form:
$$d(e_i)=\alpha_{i}e_i, \quad d(x_i)=\beta_{i}e_i, \quad 1 \leq i \leq n.$$

Let $\Delta$ be a local derivation on $L_n$, then $\Delta(e_i) = d_{e_i}(e_i) = \gamma_{i}e_i$ and $\Delta(x_i) = d_{x_i}(x_i) = \delta_{i}e_i.$ Thus, $\Delta$ is a derivation.
\end{proof}

\section{Local derivation of solvable Lie algebras with model nilradical}

Let $L$ be a solvable Lie algebra and its nilradical is a model algebra $N$.
Let the characteristic sequence of $N$ be equal to $(n_1, n_2, \dots, n_k, 1).$ Then the multplication of $N$ has the following form:
\[\begin{cases}[e_i, e_1] = e_{i+1}, & 2 \leq i \leq n_1, \\
[e_{n_1+\dots + n_{j-1}+i}, e_1] = e_{n_1+\dots + n_{j-1}+i+1}, & 2 \leq j \leq k, \ 2 \leq i \leq n_j.\\
 \end{cases}\]

It is known that the maximal dimension of the complementary space of solvable Lie algebra with model nilpotent Lie algebra with
characteristic sequence $(n_1, n_2, \dots, n_k, 1)$ is equal to $k+1.$  Let $L=Q+N$ be a solvable Lie algebra with $dim Q=k+1.$ Then $L$ has a basis $\{x_1, x_2, \dots, x_{k+1}, e_1, e_2, \dots, e_n\}$ such that
the table of multiplication of $L$ has the form
\[L_{k+1}(N):\left\{\begin{array}{lll}[e_i, e_1] = e_{i+1}, & 2 \leq i \leq n_1, \\[1mm]
[e_{n_1+\dots + n_{j-1}+i}, e_1] = e_{n_1+\dots + n_{j-1}+i+1}, & 2 \leq j \leq k, & 2 \leq i \leq n_j,\\[1mm]
[e_i, x_1] = ie_i, & 1 \leq i \leq n,\\[1mm]
[e_i, x_2] = e_i, & 2 \leq i \leq n_1+1,\\[1mm]
[e_{n_1+\dots + n_{j-2}+i}, x_j] = e_{n_1+\dots + n_{j-2}+i}, & 3 \leq j \leq k+1,  & 2 \leq i \leq n_{j-1}+1.
 \end{array}\right.\]

In \cite{Ancochea} it is proved that any derivation of $L_{k+1}(N)$ is inner for any characteristic sequence $(n_1, n_2, \dots, n_k, 1)$.

In this section we investigate local derivations on $L_{k+1}(N).$
For any local derivation  $\Delta$ on $L_{k+1}(N)$  we have
$$\Delta(x_j) = d_{x_j}(x_j) = [x_j, a_j], \quad 1\leq j \leq k+1,$$
$$\Delta(e_i) = d_{e_i}(e_i) = [e_i, b_i], \quad 1\leq i \leq n.$$

Put
$$a_j = \sum\limits_{p=1}^{k+1}\alpha_{j,p}x_p+ \sum\limits_{p=1}^{n}\beta_{j,p}e_p, \quad b_i = \sum\limits_{p=1}^{k+1}\gamma_{i,p}x_p+\sum\limits_{p=1}^{n}\delta_{i,p}e_p.$$

Then we obtain
$$\Delta(x_1) = -\sum\limits_{p=1}^{n}p\beta_{1,p}e_p, \quad
\Delta(x_j) =  -\sum\limits_{p=n_1+\dots+n_{j-2}+2}^{n_1+\dots+n_{j-1}+1}\beta_{j,p}e_p, \quad 2 \leq j \leq k+1.$$
$$\Delta(e_1) = \gamma_{1,1}e_1 - \sum\limits_{j=2}^{k+1}\sum\limits_{p= n_1+\dots+n_{j-2}+2}^{n_1+\dots+n_{j-1}}\delta_{1,i}e_{p+1},$$
$$\Delta(e_i) = (i\gamma_{i,1} + \gamma_{i,j})e_i + \delta_{i,1} e_{i+1}, \quad n_1+\dots+n_{j-2}+2 \leq i \leq n_1+\dots+n_{j-1}, \ 2 \leq j \leq k-1 $$
$$\Delta(e_i) = (i\gamma_{i,1} + \gamma_{i,j})e_i, \quad  i = n_1+\dots+n_{j-1}+1, \ 2 \leq j \leq k-1.$$

\begin{prop}\label{prop1} There exists $y \in L_{k+1}(N),$ such that $\Delta(x_j) = [x_j, y]$ for $1 \leq j \leq k+1.$

\end{prop}

\begin{proof}

For any $j (2 \leq j \leq k+1)$ and for the fixed $s (n_1 + \dots + n_{j-2}+2\leq s \leq n_1 + \dots + n_{j-1}+1)$ we consider
$$\Delta(s x_j -  x_1) =s\Delta(x_j) - \Delta(x_1) = \sum\limits_{p=1}^{n}p\beta_{1,p}e_p - s \sum\limits_{p=n_1+\dots+n_{j-2}+2}^{n_1+\dots+n_{j-1}+1}\beta_{j,p}e_p.$$

On the other hand $$\Delta(s x_j -  x_1) = [s x_j -  x_1, y_{s x_j -  x_1}] = [s x_j -  x_1, \sum\limits_{p=1}^{k+1}A_{s x_j -  x_1,p}x_p+ \sum\limits_{p=1}^{n}B_{s x_j -  x_1,p}e_p] =$$
$$= \sum\limits_{p=1}^{n}pB_{s x_j -  x_1,p}e_p - s \sum\limits_{p=n_1+\dots+n_{j-2}+2}^{n_1+\dots+n_{j-1}+1} B_{s x_j -  x_1,p}e_p.$$

Comparing coefficients at the basis element of $e_s$ we have
$s (\beta_{1,s} - \beta_{j,s}) =0$ which implies $\beta_{j,s} = \beta_{1,s}.$

Since $j (2 \leq j \leq k+1)$ and $s (n_1 + \dots + n_{j-2}+2\leq s \leq n_1 + \dots + n_{j-1}+1)$ we have that $\beta_{j,p} = \beta_{1,p}$ for any $j$ and $p.$

If we take an element $y=\sum\limits_{p=1}^n\beta_{1,p}e_p,$ then we have
$$\Delta(x_j) = [x_j, y], \quad 1 \leq j \leq k+1.$$

\end{proof}

Therefore without loss of generality we can use $\beta_{p}$ instead of $\beta_{1,p}.$ Thus we have

\begin{equation}\label{eq4.1}\Delta(x_1) = -\sum\limits_{p=1}^{n}p\beta_{p}e_p, \quad
\Delta(x_j) =  -\sum\limits_{p=n_1+\dots+n_{j-2}+2}^{n_1+\dots+n_{j-1}+1}\beta_{p}e_p, \quad 2 \leq j \leq k+1.\end{equation}

Now we consider the value of local derivations on the generators of $N$ (which algebraically generate the basis), i.e.,
$e_1, e_2, e_{n_1+2}, \dots, e_{n_1+\dots + n_{k-1}+2}.$

\begin{prop}\label{prop2} There exists $z \in L_{k+1}(N),$ such that $$\Delta(x_j) = [x_j, z], \ \Delta(e_1) = [e_1, z], \
\Delta(e_{n_1+\dots + n_{j-2}+2}) = [e_{n_1+\dots + n_{j-2}+2}, z], \quad  2 \leq j \leq k+1.$$

\end{prop}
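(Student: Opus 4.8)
The plan is to construct $z$ explicitly, keeping its $N$-component forced and its $Q$-component free, and then to fix the free parameters so that $\operatorname{ad}_z$ reproduces $\Delta$ on the generators $e_1, e_2, e_{n_1+2}, \dots, e_{n_1+\dots+n_{k-1}+2}$. Write $m_j = n_1+\dots+n_{j-2}+2$ for the starting index of the $(j-1)$-st chain, so the generators in question are $e_1$ together with $e_{m_2},\dots,e_{m_{k+1}}$. I look for $z = \sum_{l=1}^{k+1} A_l x_l + \sum_{p=1}^{n}\beta_p e_p$; that is, I keep the $N$-part equal to the element $y$ produced in Proposition \ref{prop1} and leave the scalars $A_l$ to be determined. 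Since $[x_j,x_l]=0$, the $Q$-part of $z$ does not affect $[x_j,z]$, so $[x_j,z]=[x_j,y]=\Delta(x_j)$ for every $j$ whatever the $A_l$ are; thus any admissible choice of the $A_l$ automatically retains the conclusion of Proposition \ref{prop1}.

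Next I would compute $[e_1,z]$ and $[e_{m_j},z]$ from the table of $L_{k+1}(N)$. Using $[e_1,x_1]=e_1$, $[e_1,x_l]=0$ for $l\geq 2$, and $[e_1,e_p]=-e_{p+1}$ on the generating ranges, one gets $[e_1,z]=A_1 e_1-\sum_{p}\beta_p e_{p+1}$, the sum running over exactly the index set appearing in the formula for $\Delta(e_1)$. Comparing with $\Delta(e_1)=\gamma_{1,1}e_1-\sum_p\delta_{1,p}e_{p+1}$ shows the diagonal term matches precisely when $A_1=\gamma_{1,1}$, while the remaining terms match if and only if $\beta_p=\delta_{1,p}$ for every generating index $p$. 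Likewise, since $e_{m_j}$ is scaled by $x_1$ (as $m_j$) and by $x_j$ (as $1$) and shifted by $e_1$ (to $e_{m_j+1}$, when the chain is nontrivial), I obtain $[e_{m_j},z]=(m_jA_1+A_j)e_{m_j}+\beta_1 e_{m_j+1}$, to be compared with $\Delta(e_{m_j})=(m_j\gamma_{m_j,1}+\gamma_{m_j,j})e_{m_j}+\delta_{m_j,1}e_{m_j+1}$. Given $A_1=\gamma_{1,1}$, the diagonal condition $m_jA_1+A_j=m_j\gamma_{m_j,1}+\gamma_{m_j,j}$ determines $A_j$ uniquely, and since each $A_j$ occurs in exactly one such equation the system for $A_1,\dots,A_{k+1}$ is triangular and solvable. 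The off-diagonal terms then match if and only if $\beta_1=\delta_{m_j,1}$.

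The proposition therefore reduces to the two families of scalar identities $\beta_p=\delta_{1,p}$ (over generating $p$) and $\beta_1=\delta_{m_j,1}$ (over $2\leq j\leq k+1$), and proving these is the main obstacle. The numbers $\beta_p$ are already pinned down by $\Delta(x_1)=-\sum_p p\beta_p e_p$, whereas the $\delta$'s come from the separate pointwise derivations $d_{e_1}$ and $d_{e_{m_j}}$, so a priori there is no reason for them to agree. To force agreement I would apply the local-derivation hypothesis to two-term elements. For the first family I evaluate $\Delta$ on $x_1+e_1$: writing $\Delta(x_1+e_1)=[x_1+e_1,c]$ for a suitable $c=\sum_l A'_l x_l+\sum_p B'_p e_p$ and expanding both this and $\Delta(x_1)+\Delta(e_1)$, the coefficient of each $e_m$ yields a relation in which the $x_1$-action contributes the scaling $-mB'_m$ and the $e_1$-action contributes the shift $-B'_{m-1}$; comparing these, starting at the chain-starting indices (where the shift term is absent because $m-1$ closes a chain) and descending along each chain, pins down $B'_p=\beta_p$ and hence $\delta_{1,p}=\beta_p$. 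For the second family I argue analogously with $x_1+e_{m_j}$ (or $x_j+e_{m_j}$), isolating the coefficient of $e_{m_j+1}$ to obtain $\delta_{m_j,1}=\beta_1$.

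Once these scalar identities are established, the element $z=\sum_l A_l x_l+y$ with the $A_l$ chosen as above satisfies $\Delta(x_j)=[x_j,z]$, $\Delta(e_1)=[e_1,z]$ and $\Delta(e_{m_j})=[e_{m_j},z]$ simultaneously, which is the assertion. The delicate points to watch are the bookkeeping at the chain boundaries, where the nilpotent shift $e_p\mapsto e_{p+1}$ vanishes, and the interaction of that shift with the $x_1$-grading; these are precisely what make the coefficient comparison substantive rather than a formality.
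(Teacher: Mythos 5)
Your overall architecture is the same as the paper's: reduce the proposition to the scalar identities $\delta_{1,p}=\beta_p$ (over the generating indices $p$) and $\delta_{m_j,1}=\beta_1$, then assemble $z$ by adjoining to $y$ a $Q$-part whose coefficients are determined triangularly from the diagonal terms. That reduction, and the construction of $z$ once the identities are known, are correct. The gap is in the step you yourself single out as the main obstacle: the two-term test elements do not deliver these identities. Write $\Delta(x_1+e_1)=[x_1+e_1,c]$ with $c=\sum_l A'_lx_l+\sum_p B'_pe_p$. The coefficient of $e_m$ gives $-mB'_m-B'_{m-1}=-m\beta_m-\delta_{1,m-1}$ (the shift term appearing only when $m-1$ lies in a generating range). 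Since $[e_m,x_1]=me_m$ with $m\neq 0$ for every $m$, each such equation contains the fresh unknown $B'_m$ with nonzero coefficient, so the whole system is solvable for the $B'_m$ no matter what the $\delta_{1,p}$ are: you do get $B'_2=\beta_2$ at a chain start, but the next equation merely expresses $B'_3$ in terms of $\delta_{1,2}$, and no constraint on $\delta_{1,2}$ is ever produced. The claim that the recursion ``pins down $B'_p=\beta_p$ and hence $\delta_{1,p}=\beta_p$'' is therefore unjustified. The same underdetermination occurs for $x_1+e_{m_j}$, and $x_j+e_{m_j}$ is worse still, since then $B'_1$ is not constrained by any coefficient at all.

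The missing idea is a third summand that cancels the diagonal action exactly at the index where the nilpotent shift lands. The paper tests $\Delta$ on $x_1-(i+1)x_2-e_1$: on the first chain $[e_p,x_1-(i+1)x_2]=(p-i-1)e_p$ vanishes at $p=i+1$, so the coefficient of $e_{i+1}$ contains no $B_{i+1}$ and collapses to $B_i=\delta_{1,i}$; a descent along the chain, using the identities $\delta_{1,m}=\beta_m$ already established for $m<i$, forces $B_i=\beta_i$ and hence $\delta_{1,i}=\beta_i$. Likewise $x_1-(m_j+1)x_j-e_{m_j}$ isolates $B_1$ in the coefficient of $e_{m_j+1}$ and compares it with the coefficient of $e_1$, yielding $\delta_{m_j,1}=\beta_1$. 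Without a cancellation of this kind your coefficient comparison remains underdetermined and the argument does not close.
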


\begin{proof}

By Proposition \ref{prop1} we have that there exist $y\in L_{k+1}(N)$ such that $\Delta(x_j) = [x_j, y].$

Consider $\Delta(x_1 -  3x_2 - e_1)$ Using the equality \eqref{eq4.1} we have
$$ \Delta(x_1 -  3x_2 - e_1) = -\sum\limits_{p=1}^{n}p\beta_{p}e_p + 3\sum\limits_{p=2}^{n_1+1}\beta_{p}e_p - \gamma_{1,1}e_1 + \sum\limits_{j=2}^{k+1}\sum\limits_{p= n_1+\dots+n_{j-2}+2}^{n_1+\dots+n_{j-1}}\delta_{1,i}e_{p+1}$$

On the other hand
$$\Delta(x_1 -  3x_2 - e_1) = [x_1 -  3x_2 - e_1, y_{x_1 -  3x_2 - e_1}] = [x_1 -  3x_2 - e_1, \sum\limits_{p=1}^{k+1}A_{x_1 -  3x_2 - e_1,p}x_p+ \sum\limits_{p=1}^{n}B_{x_1 -  3x_2 - e_1,p}e_p]=$$
$$= - \sum\limits_{p=1}^{n}pB_{x_1 -  3x_2 - e_1,p}e_p + 3\sum\limits_{p=2}^{n_1+1}B_{x_1 -  3x_2 - e_1,p}e_p - A_{x_1 -  3x_2 - e_1,1}x_1 + \sum\limits_{j=2}^{k+1}\sum\limits_{p= n_1+\dots+n_{j-2}+2}^{n_1+\dots+n_{j-1}}B_{x_1 -  3x_2 - e_1,p}e_{p+1} $$

Comparing the coefficients at the basis elements $e_2$ and  $e_3,$ we get
$$B_{x_1 -  3x_2 - e_1,2} = \beta_{2}, \quad B_{x_1 -  3x_2 - e_1,2} = \delta_{1,2}, $$ which implies $\delta_{1,2} = \beta_{2}.$

Considering
$\Delta(x_1 -  (i+1)x_2 - e_1)$ inductively we obtain that $\delta_{1,i} = \beta_{i}$ for $2 \leq i \leq n_1.$  Indeed,
$$ \Delta(x_1 -  (i+1)x_2 - e_1) = -\sum\limits_{p=1}^{n}p\beta_{p}e_p + (i+1)\sum\limits_{p=2}^{n_1+1}\beta_{p}e_p - \gamma_{1,1}e_1 + \sum\limits_{j=2}^{k+1}\sum\limits_{p= n_1+\dots+n_{j-2}+2}^{n_1+\dots+n_{j-1}}\delta_{1,p}e_{p+1}$$

On the other hand
$$\Delta(x_1 -  (i+1)x_2 - e_1) = [x_1 -  (i+1)x_2 - e_1, y_{x_1 -  (i+1)x_2 - e_1}] = $$$$[x_1 -  (i+1)x_2 - e_1, \sum\limits_{p=1}^{k+1}A_{x_1 -  (i+1)x_2 - e_1,p}x_p+ \sum\limits_{p=1}^{n}B_{x_1 -  (i+1)x_2 - e_1,p}e_p]=$$
$$= - \sum\limits_{p=1}^{n}pB_{x_1 -  (i+1)x_2 - e_1,p}e_p + (i+1)\sum\limits_{p=2}^{n_1+1}B_{x_1 -  (i+1)x_2 - e_1,p}e_p - $$$$ -A_{x_1 -  (i+1)x_2 - e_1,1}x_1 + \sum\limits_{j=2}^{k+1}\sum\limits_{p= n_1+\dots+n_{j-2}+2}^{n_1+\dots+n_{j-1}}B_{x_1 -  (i+1)x_2 - e_1,p}e_{p+1} $$

Comparing the coefficients at the basis elements $e_2, e_3, \dots e_{i+1},$ we get
\begin{equation}\label{eq4.2} \begin{cases} (i-1)B_{x_1 -  (i+1)x_2 - e_1,2} = (i-1)\beta_{2}, \\
(i-2)B_{x_1 -  (i+1)x_2 - e_1,3} + B_{x_1 -  (i+1)x_2 - e_1,2} = (i-2)\beta_{3} + \delta_{1,2},\\
(i-3)B_{x_1 -  (i+1)x_2 - e_1,4} + B_{x_1 -  (i+1)x_2 - e_1,3} = (i-3)\beta_{4} + \delta_{1,3},\\
\dots\dots\dots\dots\dots\dots\dots\dots\dots\dots\dots\dots\dots\dots \\
B_{x_1 -  (i+1)x_2 - e_1,i} + B_{x_1 -  (i+1)x_2 - e_1,i-1} = \beta_{i} + \delta_{1,i-1},\\
B_{x_1 -  (i+1)x_2 - e_1,i} = \delta_{1,i}.
\end{cases} \end{equation}

By induction hypotheses we have $\delta_{1,2} = \beta_2, \ \delta_{1,3} = \beta_3, \ \delta_{1,i-1} = \beta_{i-1}$ and from \eqref{eq4.2} we obtain that $\delta_{1,i} = \beta_i.$

In a similar way considering $\Delta(x_1 -  (n_1+\dots + n_{j-2}+i+1)x_{j} - e_1)$ for $3 \leq j \leq k+1,$ $2 \leq i \leq n_{j-1}$ we obtain $$\delta_{1,i} = \beta_1, \quad n_1+\dots + n_{j-2}+2 \leq i \leq n_1+\dots + n_{j-1}, \ 2 \leq j \leq k+1,$$

From $\Delta(x_1 -  (n_1+\dots + n_{j-2}+3)x_{j} - e_{n_1+\dots + n_{j-2}+2})$ for $2 \leq j \leq k+1,$
 we obtain $$\delta_{n_1+\dots + n_{j-2}+2, 1} = \beta_{1}, \quad 2 \leq j \leq k+1.$$

Therefore, we obtain that for the element
$$z=y + \gamma_{1,1}x_1+\sum\limits_{j=2}^{k+1}(\gamma_{n_1+\dots+n_{j-2}+2, 3} + (n_1+\dots+n_{j-2}+2)(\gamma_{n_1+\dots+n_{j-2}+2, 1}-\gamma_{1,1}))x_{j}$$
the following equalities are hold
$$\Delta(x_j) = [x_j, z], \ \Delta(e_1) = [e_1, z], \
\Delta(e_{n_1+\dots + n_{j-2}+2}) = [e_{n_1+\dots + n_{j-2}+2}, z], \quad  2 \leq j \leq k+1.$$

\end{proof}

From Proposition \ref{prop1} and \ref{prop2}, we have that for any local derivation $\Delta$ there exist an element $z\in L_{k+1}(N)$ such that
$\Delta(x) = [z,x]$ for each generator of $x\in L_{k+1}(N).$

Thus, if we put $z = \sum\limits_{p=1}^{k+1}\gamma_{p}e_p + \sum\limits_{p=1}^{n}\beta_{p}e_p,$ then we have
\begin{equation}\label{eq4.3}\Delta(x_1) = -\sum\limits_{p=1}^{n}p\beta_{p}e_p, \quad
\Delta(x_j) =  -\sum\limits_{p=n_1+\dots+n_{j-2}+2}^{n_1+\dots+n_{j-1}+1}\beta_{p}e_p, \quad 2 \leq j \leq k+1.
\end{equation}

$$\Delta(e_1) = \gamma_{1}e_1 - \sum\limits_{j=2}^{k+1}\sum\limits_{p= n_1+\dots+n_{j-2}+2}^{n_1+\dots+n_{j-1}}\beta_{p}e_{p+1},$$
$$\Delta(e_{n_1+\dots + n_{j-2}+2}) = \left((n_1+\dots + n_{j-2}+2)\gamma_1 + \gamma_j\right)e_{n_1+\dots + n_{j-2}+2} + \beta_{1}e_{n_1+\dots + n_{j-2}+3},  \quad  2 \leq j \leq k+1.$$

\begin{thm}
Any local derivation on the solvable Lie algebra $L_{k+1}(N)$ is a derivation.
\end{thm}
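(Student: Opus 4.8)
The plan is to prove that $\Delta$ coincides with the inner derivation $ad_z$ produced in Propositions \ref{prop1} and \ref{prop2}; since by \cite{Ancochea} every derivation of $L_{k+1}(N)$ is inner, this identification immediately shows $\Delta$ is a derivation. By \eqref{eq4.3} and the two displays following it, $\Delta$ already agrees with $[\,\cdot\,,z]$, where $z=\sum_{p}\gamma_{p}x_{p}+\sum_{p}\beta_{p}e_{p}$, on $x_{1},\dots,x_{k+1}$, on $e_{1}$, and on every chain-start $e_{n_{1}+\dots+n_{j-2}+2}$. These elements generate $L_{k+1}(N)$ as a Lie algebra, but since a local derivation is only assumed linear, agreement on generators does \emph{not} yet force agreement everywhere. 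Hence it remains precisely to compute $\Delta(e_{i})$ on the interior basis vectors $e_{i}$ (those which are neither $e_{1}$ nor a chain-start) and to verify $\Delta(e_{i})=[e_{i},z]=(i\gamma_{1}+\gamma_{j(i)})e_{i}+\beta_{1}e_{i+1}$, where $x_{j(i)}$ is the element of the complementary space acting as the identity on the chain containing $e_{i}$.

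First I would fix the shape of $\Delta(e_{i})$. For an interior $e_{i}$, the local derivation property together with innerness of all derivations \cite{Ancochea} gives $c\in L_{k+1}(N)$ with $\Delta(e_{i})=[e_{i},c]$; reading off the multiplication table (the only nonzero brackets of $e_{i}$ are $[e_{i},x_{1}]=ie_{i}$, $[e_{i},x_{j(i)}]=e_{i}$ and $[e_{i},e_{1}]=e_{i+1}$) shows that $[e_{i},L_{k+1}(N)]=\langle e_{i},e_{i+1}\rangle$, so $\Delta(e_{i})=a_{i}e_{i}+b_{i}e_{i+1}$ for scalars $a_{i},b_{i}$ (and $\Delta(e_{i})=a_{i}e_{i}$ when $e_{i}$ is a chain-end, where $[e_{i},e_{1}]=0$). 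The whole problem reduces to proving $a_{i}=i\gamma_{1}+\gamma_{j(i)}$ and $b_{i}=\beta_{1}$.

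To determine $a_{i}$ and $b_{i}$ I would evaluate $\Delta$ on carefully chosen elements that contain $e_{i}$, in the spirit of Proposition \ref{prop2}. The decisive observation is that $ad_{x_{1}}$ has $e_{i}$ as an eigenvector with the nonzero eigenvalue $i$, so $e_{i}\in[e_{i},L_{k+1}(N)]$ always; consequently $a_{i}$ is invisible to any probe $w$ for which $ad_{w}$ is nondegenerate at $e_{i}$. I would therefore test $\Delta$ on elements of the form $w=x_{1}-i\,x_{j(i)}-e_{1}+(\text{a combination of }e_{s},\dots,e_{i})$, where the coefficient $i$ in front of $x_{j(i)}$ is chosen so that $ad_{x_{1}-i x_{j(i)}}$ annihilates $e_{i}$, degenerating $[w,L_{k+1}(N)]$ exactly in the $e_{i}$-direction, while the summand $-e_{1}$ supplies, through $[e_{1},\cdot]$, the nilpotent shift coupling consecutive chain elements. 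Writing $\Delta(w)=[w,c_{w}]$, expanding the left side by linearity using the already-known values of $\Delta$ on the generators and on $e_{i-1}$, and comparing coefficients at $e_{i}$ and $e_{i+1}$ (just as in the coefficient comparisons of Propositions \ref{prop1} and \ref{prop2}) should force $a_{i}=i\gamma_{1}+\gamma_{j(i)}$ and $b_{i}=\beta_{1}$. This is carried out as an induction along each chain, starting from the chain-start handled in \eqref{eq4.3} and moving outward, then repeated over all $k$ chains, with chain-ends treated by the same comparison restricted to the $e_{i}$-coefficient.

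The hard part is exactly the degeneracy just described. In contrast with the abelian-nilradical case of Theorem \ref{thm3.2}, the diagonal coefficient $a_{i}$ cannot be pinned down by naive single- or two-term probes: for every such $w$ the element $e_{i}$ still lies in $[w,L_{k+1}(N)]$, so the comparison leaves $a_{i}$ free. One must engineer combinations whose adjoint operator is singular in the single direction $e_{i}$ while still coupling $e_{i}$ to its neighbours via $e_{1}$ and $x_{1}$, so that the membership $\Delta(w)\in[w,L_{k+1}(N)]$ yields exactly one equation for $a_{i}$ and one for $b_{i}$. Identifying this correct family of test elements is the crux; once it is in hand, each inductive step is a finite linear system solved precisely as in Proposition \ref{prop2}, and the index bookkeeping—tracking which $x_{j}$ acts on which chain and propagating the induction across positions and across all chains—is routine.
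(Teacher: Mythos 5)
Your overall strategy coincides with the paper's: reduce to the interior basis vectors $e_i$, use the innerness of all derivations of $L_{k+1}(N)$ to force $\Delta(e_i)=a_ie_i+b_ie_{i+1}$, and then pin down $a_i$ and $b_i$ by evaluating $\Delta$ on probes of the form $x_1-c\,x_{j(i)}+(\text{nilradical terms})$, where $c$ is chosen so that the adjoint of the $x$-part degenerates at the target vector. That degeneracy mechanism is exactly what the paper exploits (in the $k=1$ model the coefficient $j+1$ in front of $x_2$ kills the $e_{j+1}$-component of $ad_{x_1-(j+1)x_2}$), and your observation that agreement of $\Delta$ with $ad_z$ on Lie generators does not suffice, because $\Delta$ is only linear, is the correct reason the extra work is needed.

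The gap is that you stop precisely where the work is, and you say so yourself: the ``correct family of test elements'' is never exhibited, and the claim that the membership $\Delta(w)\in[w,L_{k+1}(N)]$ then ``yields exactly one equation for $a_i$'' is not accurate as stated. Even after the $e_i$-eigenvalue of the $x$-part is killed, the $e_i$-coefficient of $[w,c_w]$ still contains the free parameters of $c_w$: the auxiliary elements $e_s$ in $w$ contribute $A_{1},A_{j}$ through $[e_s,x_1]$ and $[e_s,x_j]$, and the $e_1$-summand contributes $B$'s through $[e_1,e_{i-1}]$. So one obtains a triangular system in the unknowns $A_{\cdot},B_{\cdot}$ rather than a single clean equation, and one must produce a linear combination of the coefficient equations at $e_1,\dots,e_i$ that eliminates all of them. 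The paper does this concretely: for the off-diagonal coefficient it takes $x_1-(s+1)x_j+e_{\text{target}}$ and compares at $e_1$ and at the position one past the target (both comparisons involve only $B_{\cdot,1}$); for the diagonal coefficient it takes $x_1-(s+1)x_j+e_1-(s-1)e_{\text{chain-start}}+\frac{1}{(s-2)!}\,e_{\text{target}}$, whose factorial weight is engineered exactly so that the combination $(j-1)[1]+[2]+[3]+\sum_{k=4}^{j+1}\frac{(j-2)!}{(j+1-k)!}[K]$ of the coefficient equations cancels every $A$ and $B$ and leaves $i\gamma_{i,1}+\gamma_{i,j}=i\gamma_1+\gamma_j$. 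Without exhibiting such a probe and verifying that the elimination closes up, the argument remains a plausible plan rather than a proof; with it, it is the paper's proof.
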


\begin{proof} First we prove of the Theorem for case $k=1,$ i.e., the characteristic sequence of the model nilradical $N$ is $(n,1).$
 Then we have the basis $\{x_1, x_2, e_1, e_2, \dots , e_{n+1}\}$ of $L_{2}(N)$ such that
$$\begin{cases}[e_i, e_1] =e_{i+1}, & 2 \leq i \leq n, \\[1mm]
[e_i, x_1] = ie_i, & 1 \leq i \leq n+1,\\[1mm]
[e_i, x_2] = e_i, & 2 \leq i \leq n+1.\end{cases}$$

Let $\Delta$ be a local derivation of $L_{2}(N).$ By Propositions \ref{prop1} and \ref{prop2} we have
$$\begin{cases} \Delta(e_1) = \gamma_{1}e_1 - \sum\limits_{k=2}^{n} \beta_{k}e_{k+1},& \Delta(x_1) =  \sum\limits_{k=1}^{n+1} k\beta_{k}e_k, \\
  \Delta(e_2) = (2\gamma_1+\gamma_{2})e_2 + \beta_{1}e_3, & \Delta(x_2) = - \sum\limits_{k=2}^{n+1} \beta_{k}e_k,\\
\Delta(e_i) = (i\gamma_{i,1} +\gamma_{i,2})e_i + \delta_{i,1}e_{i+1}, & 3 \leq i \leq n,\\
\Delta(e_{n+1}) = ((n+1)\gamma_{n+1,1} +\gamma_{n+1,2})e_{n+1} .\end{cases}$$

Consider $$\Delta(x_1 - (j+1)x_2 + e_j) = - \beta_{1}e_1 + (j\gamma_{j,1} +\gamma_{j,2})e_j + \delta_{j,1}e_{j+1} + \sum\limits_{k=2}^n (j+1-k)\beta_{k}e_k.$$

On the other hand $$\Delta(x_1 - (j+1)x_2 + e_j) = [x_1 - (j+1)x_2 + e_j, A_{j,1}x_1 + A_{j,2}x_2 + \sum\limits_{k=1}^{n+1}B_{j,k}e_k]=$$
$$= - B_{j,1} e_1 +(jA_{j,1} + A_{j,2} + B_{j,j-1})e_j + B_{j,1}e_{j+1}+\sum\limits_{k=2}^{n+1} (j+1-k)B_{j,k}e_k.$$

Comparing the coefficients at the basis elements $e_1$ and $e_{j+1}$ we have  $B_{j,1} = \delta_{j,1}$ and $B_{j,1} = \beta_{1},$ which implies

$$\delta_{j,1} = \beta_{1}, \quad  3 \leq j \leq n.$$

Now consider $$\Delta(x_1 - (j+1)x_2 + e_1 - (j-1)e_2 + \frac 1 {(j-2)!} e_{j+1}) = (\gamma_{1} - \beta_{1}) e_1 - (j-1) (2\gamma_1+\gamma_{2} - \beta_{2}) e_2 - $$
$$-\big((j-1)\beta_1 + \beta_2 - (j-2)\beta_3\big) e_3  -  \big(\beta_{j}-  \frac 1 {(j-2)!}((j+1)\gamma_{j+1,1} + \gamma_{i+1,2})\big) e_{j+1}$$
$$- \sum\limits_{k=4,k \neq j+1}^{n+1}(\beta_{k-1} - (j+1-k)\beta_{k} )e_k.$$

On the other hand $$\Delta(x_1 - (j+1)x_2 + e_1 - (j-1)e_2 + \frac 1 {(j-2)!} e_{j+1}) = $$$$[x_1 - (j+1)x_2 + e_1 - (j-1)e_2 + \frac 1 {(j-2)!} e_{j+1}, A_{j,1}x_1 + A_{j,2}x_2 + \sum\limits_{k=1}^{n+1} B_{j,k}e_k]= $$
$$=(A_{j,1}- B_{j,1}) e_1 - (j-1)(2A_{j,1}+A_{j,2}-B_{j,2})e_2 -\big((j-1)B_{j,1} + B_{j,2} - (j-2)B_{j,3})e_3-$$
$$
\big(B_{j,k}-  \frac 1 {(j-2)!}((j+1)A_{j,1} + A_{j,2})\big) e_{j+1} - \sum\limits_{k=4, k\neq j+1}^{n+1}(B_{j,k-1} - (j+1-k)B_{j,k})e_k.$$

Comparing the coefficients at the basis elements of $e_1, e_2, \dots e_{j+1},$ we have
$$\begin{cases}
A_{j,1}- B_{j,1} = \gamma_{1} - \beta_{1}, \\
2A_{j,2} + A_{j,2}-B_{j,2} = 2\gamma_{1} + \gamma_{2} - \beta_{2}, \\
(j-1)B_{j,1} + B_{j,2} - (j-2)B_{j,3} = (j-1)\beta_1 + \beta_2 - (j-2)\beta_3, \\
B_{j,k-1} - (j+1-k)B_{j,k} = \beta_{k-1} - (j+1-k)\beta_{k},& 4 \leq k \leq j,\\
B_{j,k}-  \frac 1 {(j-2)!}\big((j+1)A_{j,1} + A_{j,2}\big) = \beta_{k}-  \frac 1 {(j-2)!}\big((j+1)\gamma_{j+1,1} + \gamma_{i+1,2}\big)
\end{cases}
$$

From this system of equations considering $(j-1) [1] + [2] + [3] + \sum\limits_{k=4}^{j+1} \frac{(j-2)!} {(j+1-k)!}[K]$ we have that
$$(j+1)\gamma_{j+1,1} + \gamma_{i+1,2} = (j+1)\gamma_{1} + \gamma_{2},$$
where $[K]$ is the $k$-th equation of the previous system.

Therefore, we obtain that  $\Delta(y) = [y, \gamma_{1}x_1 + \gamma_{2}x_2 + \sum\limits_{k=1}^{n+1}\beta_{k}e_k]$ for any $y \in L.$ Hence, $\Delta$ is a derivation.

Now we are able to prove the Theorem for the general case. Let $\Delta$ be a local derivation on $L_{k+1}(N),$ then by Propositions \ref{prop1} and \ref{prop2} we have

\begin{equation}\label{eq4.3}\Delta(x_1) = -\sum\limits_{p=1}^{n}p\beta_{p}e_p, \quad
\Delta(x_j) =  -\sum\limits_{p=n_1+\dots+n_{j-2}+2}^{n_1+\dots+n_{j-1}+1}\beta_{p}e_p, \quad 2 \leq j \leq k+1.
\end{equation}

$$\Delta(e_1) = \gamma_{1}e_1 - \sum\limits_{j=2}^{k+1}\sum\limits_{p= n_1+\dots+n_{j-2}+2}^{n_1+\dots+n_{j-1}}\beta_{p}e_{p+1},$$
$$\Delta(e_{n_1+\dots + n_{j-2}+2}) = \left((n_1+\dots + n_{j-2}+2)\gamma_1 + \gamma_j\right)e_{n_1+\dots + n_{j-2}+2} + \beta_{1}e_{n_1+\dots + n_{j-2}+3},  \quad  2 \leq j \leq k+1.$$

$$\Delta(e_i) = (i\gamma_{i,1} + \gamma_{i,j})e_i + \delta_{i,1} e_{i+1}, \quad n_1+\dots+n_{j-2}+3 \leq i \leq n_1+\dots+n_{j-1}, \ 2 \leq j \leq k-1 $$
$$\Delta(e_i) = (i\gamma_{i,1} + \gamma_{i,j})e_i, \quad  i = n_1+\dots+n_{j-1}+1, \ 2 \leq j \leq k-1.$$

To prove of the Theorem we have to show \begin{equation}\label{eq4.4}\delta_{i,1} = \beta_{i}, \quad n_1+\dots+n_{j-2}+3 \leq i \leq n_1+\dots+n_{j-1}, \ 2 \leq j \leq k-1,\end{equation}
and
\begin{equation}\label{eq4.5}i\gamma_{i,1} + \gamma_{i,j} = i\gamma_{1} + \gamma_{j}, \quad n_1+\dots+n_{j-2}+3 \leq i \leq n_1+\dots+n_{j-1}, \ 2 \leq j \leq k.\end{equation}

Similarly to the case $k=1,$ considering $\Delta(x_1 - (n_1+\dots + n_{j-2}+s+1)x_j + e_{n_1+\dots + n_{j-2}+s})$ we obtain the equality \eqref{eq4.4} and analyzing $$\Delta\left(x_1 - (n_1+\dots + n_{j-2}+s+1)x_j + e_1 - (s-1)e_{n_1+\dots + n_{j-2}+s} + \frac{1}{(s-2)!}e_{n_1+\dots + n_{j-2}+s+1}\right)$$ for $2\leq s \leq n_j-1$ we get the equality \eqref{eq4.5}.

\end{proof}

\subsection{Non model nilradical case}

In this subsection we give some examples of local derivation of solvable Lie algebras with maximal dimension of complementary vector space. Such solvable algebras we call maximal solvable Lie algebras with nilradical $N,$ i.e., we a solvable Lie algebra $L$ with nilradical $N$ is said to be maximal, if there is no algebra $M$ with nilradical $N$ such that $dim(M) > dim(L).$

In the first example we consider solvable Lie algebra with non model nilradical and dimension of complementary vector space equal to the number of generators of the nilradical.

\begin{exam}
Let $N$ be $8$-dimensional nilpotent algebra with multiplication $$\begin{array}{lll}[e_2, e_1] =e_{4}, & [e_4, e_1] =e_{5}, & [e_5, e_1] =e_{6},\\[1mm]
[e_3, e_2] =e_{7}, & [e_7, e_1] =e_{8}, & [e_4, e_3] =-e_{8}.\end{array}$$
It is obvious that this is a non model nilpotent algebra with the characteristic sequence $(4,3,1)$ and the number of generators is equal to $3.$
Moreover, the maximal solvable Lie algebra with nilradical $N$ is $11$-dimensional and has the multiplication:
$$L: \left\{\begin{array}{llllll}[e_2, e_1] =e_{4}, & [e_4, e_1] =e_{5}, & [e_5, e_1] =e_{6},&
[e_3, e_2] =e_{7}, & [e_7, e_1] =e_{8}, & [e_4, e_3] =-e_{8},\\[1mm]
[e_1, x_1] = e_1, & [e_4, x_1] = e_4, & [e_5, x_1] = 2e_5, & [e_6, x_1] = 3e_6, & [e_8, x_1] = e_8,\\[1mm]
[e_2, x_2] = e_2, & [e_4, x_2] = e_4, & [e_5, x_2] = e_5, & [e_6, x_2] = e_6, & [e_7, x_2] = e_7, & [e_8, x_2] = e_8,\\[1mm]
[e_3, x_3] = e_3, &  [e_7, x_3] = e_7, & [e_8, x_3] = e_8.\end{array}\right.$$
Any local derivation on $L$ is a derivation moreover it is an inner derivation, i.e., $$LocDer(L) = Der(L) = ad(L).$$
\end{exam}

Now we consider an example with the dimension of complementary vector space of solvable Lie algebra less than number of  generators of the nilradical.

\begin{exam}
Let $L$ be a maximal solvable Lie algebra with five-dimensional Heisenberg nilradical
$$[e_2, e_1] =e_{5}, \quad [e_4, e_3] =e_{5}.$$

Then $dim(L) = 8$ and $L$ has the multiplication:
$$L: \left\{\begin{array}{lllll} [e_2, e_1] =e_{5}, & [e_4, e_3] =e_{5}, \\[1mm]
[e_1, x_1] = e_1, & [e_2, x_1] = e_2, & [e_3, x_1] = e_3, & [e_4, x_1] = e_4, & [e_5, x_1] = 2e_5,\\[1mm]
[e_1, x_2] = e_2, & [e_2, x_2] = e_2, & [e_3, x_2] = 2e_3,  & [e_5, x_2] = 2e_5,\\[1mm]
[e_1, x_3] = 2e_1, &  [e_3, x_3] = e_3, & [e_4, x_3] = e_4, & [e_5, x_3] = 2e_5.\end{array}\right.$$

Any local derivation of $L$ is a derivation (moreover, inner derivation).
\end{exam}

Now we formulate the following conjecture

\begin{conj} Let $L$ be a maximal solvable Lie algebra with nilradical $N.$ Then any local derivation on $L$  is a derivation.

\end{conj}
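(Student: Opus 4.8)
The plan is to exploit the structural reduction already established. By Propositions \ref{prop1} and \ref{prop2} every local derivation $\Delta$ coincides with a single inner derivation $ad_z$, where $z=\sum_{p}\gamma_p x_p+\sum_p\beta_p e_p$, on all the algebra generators $x_1,\dots,x_{k+1},e_1,e_2,e_{n_1+2},\dots,e_{n_1+\dots+n_{k-1}+2}$. Since it is proved in \cite{Ancochea} that every derivation of $L_{k+1}(N)$ is inner, it suffices to show that $\Delta$ agrees with $ad_z$ on the remaining, non-generating, basis vectors $e_i$. On these vectors the formulas recorded after Proposition \ref{prop2} give $\Delta(e_i)=(i\gamma_{i,1}+\gamma_{i,j})e_i+\delta_{i,1}e_{i+1}$ with a priori free parameters $\gamma_{i,1},\gamma_{i,j},\delta_{i,1}$, while a direct computation of $[e_i,z]$ (using that in the model nilradical the only nonzero bracket with $e_i$ among the $e_p$ is $[e_i,e_1]=e_{i+1}$) shows $ad_z(e_i)=(i\gamma_1+\gamma_j)e_i+\beta_1 e_{i+1}$. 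Thus the whole theorem reduces to the two numerical identities \eqref{eq4.4} and \eqref{eq4.5}.

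My method for extracting these identities is to evaluate $\Delta$ on carefully chosen test elements $y$ built from $x_1$, one companion generator $x_j$, and a few $e_p$, and to use the defining property $\Delta(y)=ad_{w}(y)=[y,w]$ for an element $w=w(y)$ that is permitted to depend on $y$. I would expand the left side by linearity, using the already-known action of $\Delta$ on generators, expand the right side from the multiplication table of $L_{k+1}(N)$, and then compare coefficients at each basis vector $e_p$. Each comparison yields a linear relation between the unknowns $\delta_{i,1},\gamma_{i,1},\gamma_{i,j}$, the target values $\beta_p,\gamma_p$, and the auxiliary coordinates of $w$; the whole point is to choose $y$ so that, after comparing the appropriate coefficients, those auxiliary coordinates cancel.

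Concretely, I would first establish \eqref{eq4.4}. Taking a test element of the form $x_1-(i+1)x_j+e_i$ (indexed within the relevant block) and comparing the coefficient of $e_{i+1}$ uses only $[e_i,e_1]=e_{i+1}$ and $[e_i,x_1]=ie_i$, and forces $\delta_{i,1}$ to the value dictated by $ad_z$, which is the content of \eqref{eq4.4}. The genuinely delicate identity is \eqref{eq4.5}. For it I would use the richer test element $x_1-(n_1+\dots+n_{j-2}+s+1)x_j+e_1-(s-1)e_{n_1+\dots+n_{j-2}+s}+\tfrac{1}{(s-2)!}\,e_{n_1+\dots+n_{j-2}+s+1}$, which produces a triangular system of comparison equations in the coordinates of $w$. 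The key step is to collapse this system by the same factorially weighted linear combination of its rows that appears in the computation for $k=1$; that particular combination annihilates every auxiliary coordinate of $w$ and leaves exactly $i\gamma_{i,1}+\gamma_{i,j}=i\gamma_1+\gamma_j$.

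I would organize the write-up by first carrying out the case $k=1$, where $N$ has characteristic sequence $(n,1)$, a single nilpotent Jordan block, and the cleanest combinatorics; this already contains every idea. For general $k$ the essential observation is that in $L_{k+1}(N)$ the operators $ad_{x_1}$ and $ad_{x_j}$ act on the $j$-th block precisely as $ad_{x_1},ad_{x_2}$ act in the $(n_j,1)$ situation, so test elements supported on a single block reproduce the argument block by block. I expect the main obstacle to be pinning down the exact factorial weights in the telescoping combination that proves \eqref{eq4.5}: one must check that this combination simultaneously eliminates all the auxiliary parameters of $w$ across an entire block, and then verify that the block-by-block decoupling is genuine for general $k$, i.e. that no cross-block products reintroduce the parameters one has just eliminated.
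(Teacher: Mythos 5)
The statement you are addressing is the paper's closing \emph{conjecture}, and the paper itself offers no proof of it: it is supported only by the theorem for the model-nilradical algebras $L_{k+1}(N)$ and by two worked examples with non-model nilradicals. Your argument does not close that gap, because every ingredient you invoke is specific to $L_{k+1}(N)$. Propositions \ref{prop1} and \ref{prop2} are stated and proved using the explicit multiplication table of $L_{k+1}(N)$ (in particular $[e_i,x_1]=ie_i$ and the block structure coming from the characteristic sequence of a \emph{model} nilradical); the innerness of all derivations is quoted from \cite{Ancochea} only for $L_{k+1}(N)$; and your test elements, together with the factorially weighted telescoping combination, are built from that same table. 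None of this is available for a general maximal solvable Lie algebra with an arbitrary nilradical $N$. The paper's own examples make the point: for the maximal solvable extension of the five-dimensional Heisenberg algebra one has brackets such as $[e_1,x_2]=e_2$ and $[e_1,x_3]=2e_1$, which do not fit the model pattern, so there is no block decomposition on which your reduction via $ad_{x_1}$ and $ad_{x_j}$ could operate, and the integer eigenvalues $1,2,\dots$ of $ad_{x_1}$ that drive the cancellations in \eqref{eq4.4} and \eqref{eq4.5} are simply absent.

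What you have written is, in substance, a correct outline of the paper's proof of the theorem that every local derivation of $L_{k+1}(N)$ is a derivation: reduction to generators via Propositions \ref{prop1} and \ref{prop2}, then the test elements $x_1-(\cdot)\,x_j+e_i$ for \eqref{eq4.4} and the longer element carrying the coefficient $\frac{1}{(s-2)!}$ for \eqref{eq4.5}, collapsed by the factorial linear combination of the comparison equations. But that theorem is already established in the paper; the conjecture asks for strictly more. To attack it you would at minimum need (i) replacements for Propositions \ref{prop1} and \ref{prop2} valid for an arbitrary maximal solvable extension, (ii) a description, or at least the innerness, of $Der(L)$ in that generality, and (iii) a coefficient-comparison scheme that does not depend on the weights peculiar to the model case. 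None of these is supplied, so your proposal re-proves a known special case rather than the conjectured general statement, which remains open.
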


\end{document}